\documentclass[preprint]{article}
\usepackage{lineno}
\usepackage{tikz}
\usetikzlibrary{arrows}
\usepackage{graphicx}
\usepackage{color}
\usepackage[colorlinks]{hyperref}
\usepackage{euscript}
\usepackage{mathdots}
\usepackage{yhmath}
\usepackage{cancel}
\usepackage{siunitx}
\usepackage{array}
\usepackage{multirow}
\usepackage{gensymb}
\usepackage{tabularx}
\usepackage{booktabs}
\usepackage{acronym}
\usepackage{fixltx2e}
\usepackage{enumitem}
\usepackage{upgreek}
\usepackage[all]{xy}
\usepackage{mathtools}
\usepackage[normalem]{ulem}
\usepackage{amsmath,amsthm,amsfonts,amssymb,amscd,euscript}
%%%%%%%%%%%
%%%%%%%%%%%%%%%%%

%\renewcommand{\baselinestretch}{1.1}

%\input{epsf}

%%%%%%%%%%%%%%%%%%%%%%%%%%%%%%%%%%%%%%%%%%%%%%%%%%%%%%

%%%%%%%%%%%%%%%%%%%%%%%%%%%%%%

\newcommand{\Z}{\mathbb Z}
\newcommand{\N}{\mathbb N}

\newcommand{\F}{{\bf{F}}}
\newcommand{\ifs}{$\mathfrak{I}=(X,\,\mathcal{F},\,\Sigma)\;$}

\newtheorem{thm}{Theorem}[section]
\newtheorem{cor}[thm]{Corollary}
\newtheorem{lem}[thm]{Lemma}

\newtheorem{prop}[thm]{Proposition}
\newtheorem{example}[thm]{Example}
\newtheorem{defn}[thm]{Definition}
\theoremstyle{remark}
\newtheorem{rem}[thm]{Remark}
%\numberwithin{equation}{section}

%%%%%%%%%%%%%%%%%%%%%

%my color

\modulolinenumbers[5]

\begin{document}

	\title{Shift limits of a non-autonomous system }
	\author{Dawoud Ahmadi Dastjerdi\footnote{dahmadi1387@gmail.com (ahmadi@guilan.ac.ir)},\and Mahdi Aghaee \footnote{mahdi.aghaei66@gmail.com}
	}
	\maketitle

\begin{abstract}
Let $t=t_1t_2\cdots$ be an element of the full shift with shift map $\tau$  on a finite set of characters $\mathcal{A}$ and
let $ \Sigma=\text{ closure} \{\tau^i(t):\;i\in\N\cup\{0\}\}$.	
Let $f_t=f_{t_1,\,\infty}=\cdots\circ f_{t_2}\circ f_{t_1} $ be a non-autonomous system over a compact metric space $ X $ where $t_i\in \mathcal A $. 
The set $\F_t^+=\{f_{\tau^i(t)}:\; i\in\N\}$ is called the shifted family of $f_t$.
If $t$ is a transitive point of  the full shift on $\mathcal A$, then by introducing a natural topology, $\overline{\F_t^+}$ is a classical IFS; otherwise, $\overline{\F_t^+}=\{f_\sigma=f_{\sigma_1,\,\infty}:\; \sigma\in\Sigma\}$ is a generalized IFS.
	We will show that if $ f_t$ has some various shadowing and specification properties,  then this is true for  $f_{\sigma}\in\overline{\F^+_t}$; however, this claim is not true for other properties such as transitivity, mixing and exactness. 
	Also, if $ \Sigma $ is sofic and 
$x\in X$ is periodic point for some $f_\sigma\in\overline{\F^+_t}$, then  there is a periodic $\sigma'\in\Sigma$ such that $x$ is periodic for   $f_{\sigma'}\in\overline{\F^+_t}$.
\end{abstract}

{\bf Keywords:}
iterated function systems (IFS), non-autonomous system, specification, shadowing.
\\
2010 Mathematics Subject Classification: 37B55,  37B10 .

%\linenumbers

\section{Introduction}
Assume that the functions defining a  non-autonomous dynamical system are coming from $\mathcal{F}=\{f_0,\ldots, f_{k-1}\}$. Then we write such a system as $f_ {t_1,\,\infty}=\cdots f_{t_2}\circ f_{t_1}$  where $t_i\in\mathcal{A}=\{0,\,1,\ldots,k-1\}$ (our priority is to have $\mathcal{A}$ finite, but if our results is valid for infinite case, we do mention explicitly).
 On the other hand, $t=t_1t_2\cdots\in\mathcal{A}^\N$ can be considered as an element of the subshift $\Sigma=\overline{\mathcal{O}^+(t)}=$closure$\{\tau^i(t):\,i\in\N_0=\N\cup\{0\}\}$ where $\tau$ is the shift map and $\mathfrak{I}=(X,\,\mathcal{F},\,\Sigma)$ is a \emph{general iterated function system} (IFS).
  That means the combinations of maps are not freely as in the \emph{classical IFS} and are done according to restrictions imposed by $\Sigma$.
   By this settings a classical IFS is $\mathfrak{I}=(X,\,\mathcal{F},\,\Sigma_{|\mathcal{F}|})$ where $\Sigma_{|\mathcal{F}|}$ is the full shift on $|\mathcal{F}|$ symbols. 
   Here for $\sigma=\sigma_1\sigma_2\cdots\in\Sigma$, we call the non-autonomous dynamical system $f_{\sigma_1,\,\infty}$ a \emph{shift limit} of $f_ {t_1,\,\infty}$ and we are interested to see what dynamical properties of $f_ {t_1,\,\infty}$ are shared with $f_ {\sigma_1,\,\infty}$. 
Some properties that we have considered  are specification, shadowing, chaos and in particular different periodic behavior  appearing in non-autonomous dynamical systems. 
Our approach is to consider a typical transitive $t$ in a subshift $\Sigma=\overline{\mathcal{O}^+(t)}$ and to see how the aforementioned  dynamical properties of 
$f_{t_1,\,\infty}$ is carried to $f_{\sigma_1,\,\infty}$ for $\sigma=\sigma_1\sigma_2\cdots\in\Sigma$. 
If that does so, then we say that specific property \emph{along} $t$ is preserved along $\sigma$ as well.

Let us recall that many aspects of both dynamical systems, that is non-autonomous dynamical systems and IFS's have been worked out in other investigations. 
Salman and Das \cite{salman2020specification} discussed specification property in non-autonomous discrete systems and Vasisht and Das \cite{vasisht2021specification} studied specification and shadowing properties
for non-autonomous Systems. Pravec \cite{pravec2019remarks} considered some different periodic points for
non-autonomous dynamical systems.
Specification and shadowing for classical iterated function systems have been studied in \cite{rodrigues2016specification,bahabadi2015shadowing}. The general IFS was considered by the authors \cite{dastjerdi2022iterated} and so fairly new subject.
There, a good deal of attention has been paid to transitivity. For transitivity of non-autonomous systems, 
one can consult \cite{vasisht2020furstenberg, salman2020multi, sanchez2017chaos}.
It is worth to mention that we learned a lot from 
Kolyada and Snoha \cite{kolyada1996topological}. However, their main interest is on entropy that we have not considered here.

\vspace{.5cm}
	\noindent{\bf The present paper is organized as follows.} After introducing some required basic notions in Preliminaries,
 in Section \ref{Sp and shadowing}, we show that specification and shadowing along a transitive orbit of a subshift,
 acting as our starting non-autonomous system, will be preserved along all other shift limits.
In Section \ref{sec Periodicity in IFS }, after recalling different types of periodicity in a non-autonomous system, we see how these different periodic behaviors are projected to shift limits.

\section{Preliminaries}\label{sec preliminaries}
Throughout the paper, $ (X,\,d) $ will be a compact metric space and $\mathcal{A}$ is a set of finite characters . 
Also $\N$ will be the set of natural numbers and  ${\bf N}$ the set of all non-autonomous systems (NDS) on $X$. We are dealing with those NDS's whose functions are in $\{f_j:X\rightarrow X:\; f_j$ continuous $j\in\mathcal{A}\}$.
So for $t=t_1t_2\cdots$ where $t_i\in \mathcal{A}$, denote by 

\begin{equation}\label{eq NDS}
	f_{t_1,\,\infty}=\cdots\circ f_{t_2}\circ f_{t_1}
\end{equation}
 the NDS along  $t$  and let $ f_{t_i,\,\infty}=\cdots\circ f_{t_{i+1}}\circ f_{t_i} $ be the $i$\,th shifted 
NDS of $f_{t_1,\,\infty}$. Set
\begin{equation}\label{eq F}
	\F^+_t=\F^+_t(f_{t_1,\,\infty}) =\{f_{t_i,\,\infty}:\; i\in\N\}\subseteq {\bf N}
\end{equation}
to be the \emph{shifted family} of the non-autonomous system $f_{t_1,\infty}$. Let $D(f,\,g)=\sup_{x\in X}d(f(x),\,g(x))$ be
the metric on $C(X)$, the set of continuous  self maps on $X$.
 Then 
  $$\rho(f_{t_1,\,\infty},\,f_{t'_1,\,\infty})=
 	\sum_{n\in\N}
 	\frac{D(f_{t_1\cdots t_n},\,f_{t'_1\cdots t'_n})}{2^{n-1}}  $$ 
is a metric on ${\bf N}$ where 
$ f_{t_1\cdots t_n}=f_{t_n}\circ \cdots\circ f_{t_1}$. An NDS is injective or surjective if each function defining the NDS are injective or surjective respectively.

	Now let $(\Sigma,\,\tau)$ be the one sided subshift on $\mathcal{A}$ whose shift map is $\tau$ ( See \S \ref{sec symbolic} for a brief review of symbolic dynamics) and equip $(\Sigma\times{\bf N})$ with the product topology.
	We are interested in the dynamics of the space of skew product ${\bf S}\subset (\Sigma\times{\bf N})$ 
	\begin{equation}\label{eq bf S}
		{\bf F}:{\bf S}\to{\bf S}
	\end{equation}
	where ${\bf F}$ is defined as 
	${\bf F}(\sigma,\,f_{\sigma_1,\,\infty})=
	(\tau(\sigma),\,f_{\tau(\sigma)_1,\,\infty})$. Note that by the above settings, ${\bf S}$ is compact and ${\bf F}$ is continuous.
	When $t\in \mathcal{A}^\N$, then 
	$\Sigma=\overline{\mathcal{O}^+(t)}$ is a  subshift obtained by the orbit of $t$ and ${\bf F}^+_t=\text{pr}_2(\mathcal{O}^+(t,\,f_{t_1,\,\infty}))$. This means ${\bf F}^+_t$ is the projection on the second component of the orbit of the point $(t,\,f_{t_1,\,\infty})$ under ${\bf F}$.
	The following is immediate.

\begin{prop}\label{prop closure}
	$\overline{\F^+_{t}}=\{f_{\sigma_1,\,\infty}:\;\sigma=\sigma_1\sigma_2\cdots\in\overline{\mathcal{O}^+(t)}\}$.	
\end{prop}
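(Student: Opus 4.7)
The plan is to introduce the evaluation map
\[
\Phi:\Sigma\to{\bf N},\qquad \Phi(\sigma)=f_{\sigma_1,\,\infty},
\]
show that $\Phi$ is continuous with respect to the product topology on $\Sigma$ and the metric $\rho$ on ${\bf N}$, and then exploit the compactness of $\Sigma$ to identify the image of $\Phi$ with $\overline{\F^+_t}$.

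The first step, continuity of $\Phi$, is the only step with any actual content. If $\sigma,\sigma'\in\mathcal{A}^\N$ agree on the first $N$ symbols, then the partial compositions satisfy $f_{\sigma_1\cdots\sigma_n}=f_{\sigma'_1\cdots\sigma'_n}$ for every $n\le N$, so the corresponding summands in $\rho$ vanish. For $n>N$ the sup-distance $D$ is bounded by $\operatorname{diam}(X)$, so
\[
\rho(\Phi(\sigma),\Phi(\sigma'))\;\le\;\operatorname{diam}(X)\sum_{n>N}\frac{1}{2^{n-1}}\;=\;\frac{\operatorname{diam}(X)}{2^{N-1}},
\]
which tends to $0$ as $N\to\infty$. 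This yields (uniform) continuity of $\Phi$ on $\Sigma$, essentially because the geometric weights in the definition of $\rho$ make long symbolic agreement force closeness of the NDS.

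The remaining step is a purely topological wrap-up. Since $\Sigma=\overline{\mathcal{O}^+(t)}$ is a closed subset of the compact space $\mathcal{A}^\N$, it is compact, and therefore $\Phi(\Sigma)$ is compact, hence closed, in ${\bf N}$. By definition $\F^+_t=\Phi(\mathcal{O}^+(t))\subseteq\Phi(\Sigma)$, so $\overline{\F^+_t}\subseteq\Phi(\Sigma)$. In the opposite direction, continuity of $\Phi$ gives
\[
\Phi(\Sigma)=\Phi\bigl(\overline{\mathcal{O}^+(t)}\bigr)\subseteq\overline{\Phi(\mathcal{O}^+(t))}=\overline{\F^+_t}.
\]
Combining the two inclusions yields $\overline{\F^+_t}=\Phi(\Sigma)=\{f_{\sigma_1,\,\infty}:\sigma\in\overline{\mathcal{O}^+(t)}\}$, as claimed.

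No serious obstacle is expected; the only point that must be stated carefully is the continuity estimate, and as above it follows immediately from the geometric weighting in $\rho$. This is also why the author already labels the statement ``immediate''.
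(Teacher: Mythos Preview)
Your argument is correct; it is precisely the natural way to unpack the claim, and the paper gives no proof at all, simply declaring the proposition ``immediate.'' Your continuity estimate for $\Phi$ and the compactness wrap-up are exactly the content behind that word, so there is nothing to compare.
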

The above proposition demonstrates a close relation between an NDS and the theory of iterated function systems on a general subshift which we will give a brief introduction in the next subsection.
\subsection{``Generalized" iterated function systems.}
The classical  \emph{iterated function system} (IFS) consists of all possible combinations of finitely many continuous self maps $ \mathcal{F}=\left\lbrace f_0,\,\ldots,\,f_{k-1} \right\rbrace  $ on  $X$. 
In other words, a classical IFS consists of $X$ and all maps such as 
\begin{equation}\label{eq def of f_u}
	f_u=f_{u_n}\circ\cdots \circ f_{u_1}:X\rightarrow X,\quad u=u_1\cdots u_n\in\mathcal{L}(\Sigma_k)
\end{equation}
where $\Sigma_{k}$ is the full shift on $\mathcal{A}=\{0,\,1,\cdots,k-1\}$ and $\mathcal{L}(\Sigma_k)$ is the language of $\Sigma_k$. (See below for a brief introduction of symbolic dynamics.) If one replaces $\Sigma_k$ with a subshift $\Sigma\subseteq \Sigma_k$ in \eqref{eq def of f_u}, then it will be a ``generalized" iterated function system which we refer to it again as IFS and it is denoted by
\begin{equation}\label{eq IFS}
	\mathfrak{I}=(X,\,  \mathcal{F},\, \Sigma),
\end{equation}
where $\mathcal{F}=\left\lbrace f_0,\,\ldots,\, f_{k-1}\right\rbrace$.

When $\mathfrak{I}$ is the  classical IFS; or equivalently,  when $\Sigma$ is a full shift, then still we use $\eqref{eq IFS}$ but  the subshift will be shown by $\Sigma_{|\mathcal{F}|}$.   The classical IFS is usually denoted by  $(X,\,\mathcal{F})$ by other authors.

The \emph{forward orbit} of a point $x\in X$ in an IFS, is defined as $\mathcal{O}^+(x)=\{f_u(x):\;u\in\Sigma\}$. Also to each $\sigma=\sigma_1\sigma_2\cdots\in\Sigma$, a NDS $f_\sigma=f_{\sigma_1,\,\infty}$ will be associated that  it is called the \emph{IFS along $\sigma$}.
The trajectory (or orbit) of $ x $ under $ f_\sigma$ is given by the sequence 
\begin{equation*}
	\mathcal{O}^+(x)=\{x,\, f_{\sigma_1}(x),\,f_{\sigma_1\sigma_{2}}(x),\,\ldots,\, f_{\sigma_1\cdots\sigma_{n}}(x), \,\ldots \}
\end{equation*}
where $ f_{\sigma_1\cdots \sigma_n}=f_{\sigma_n}\circ \cdots\circ f_{\sigma_1}$, $ n\geq 1 $. 
In general, $ f_{\sigma_m\cdots \sigma_n}=f_{\sigma_n}\circ \cdots\circ f_{\sigma_m}$, $ n\geq m $. 
Notice that when $ \mathcal{F}$ consists of just one function, then $ f_\sigma $ is a ``conventional" discrete dynamical system. For further properties of an IFS see 
\cite{dastjerdi2022iterated}.

\subsection{Symbolic dynamics}\label{sec symbolic}
Since we are dealing with a general subshift in an IFS such as  \eqref{eq IFS}, we require a  brief recall of the symbolic dynamics. A basic source for this theory is \cite{lind2021introduction}; here we just give a brief recall.
 Let $\mathcal{A}=\{0,\,1,\cdots,k-1\}$ be the set of finite characters also called alphabet.
  Let  $\Sigma_{k}= \mathcal{A}^{\Z} $ (resp.  $ \mathcal{A}^{\N}  $) be the collection of all bi-infinite (resp. right-infinite) sequences of
   symbols from $ \mathcal{A} $.
    The map $\tau:\Sigma_k\rightarrow \Sigma_k$ defined by $\tau(\sigma)_i=\sigma_{i+1}$ is called the \emph{shift map} and the pair $ (\Sigma_k,\,\tau) $ is the \emph{full shift} on $k$ symbols.
     Any invariant closed subspace $\Sigma$ of $\Sigma_k$ is called a \emph{subshift}.

 A \emph{word} or a \emph{block} over $ \mathcal{A} $ is a finite sequence of symbols from $ \mathcal{A} $. 
 Denote by $ \mathcal{L}_{n}(\Sigma) $ 
 the set of all \emph{admissible} words of length $n$ and call 
 $ \mathcal{L}(\Sigma): =\bigcup_{n=0}^{\infty}\mathcal{L}_n(\Sigma)$ 
 the \emph{language} of $ \Sigma $. 
 A subshift is characterized by its language or its forbidden words.
 For $ u \in \mathcal{L}_k(\Sigma) $,
 let the cylinder $_\ell[u]_{\ell+k-1}$$= _\ell\hspace{-1mm}[u_\ell\cdots u_{\ell+k-1}]_{\ell+k-1} $ be the set $ \{\sigma=\cdots\sigma_{-1}\sigma_0\sigma_1\cdots \in \Sigma :
 \sigma_{\ell}\cdots\sigma_{\ell+k-1} = u\} $.
 If $\ell=0$, we drop the subscripts and we just write $[u]$.
  A point $ \sigma \in \Sigma $ is \emph{transitive} if every word in $ \Sigma $ appears in $ \sigma $ infinitely many often and a 
  subshift $ \Sigma $ is \emph{irreducible} iff $ \Sigma $ has a transitive point. Equivalently, 
   $ \Sigma $ is {irreducible} if for every ordered pair of words $ u,\,v\in \mathcal{L} (\Sigma) $ there is a word $ w\in \mathcal{L} (\Sigma) $ so that $ uwv\in \mathcal{L} (\Sigma) $. 
   
   If forbidden set of $\Sigma$ is finite, then $\Sigma$ is
   called a \emph{shift of finite type} (SFT) and any of its factor is called \emph{sofic}.  A word $ w\in \mathcal{L}(\Sigma) $ is called \emph{synchronizing} if $ uwv \in\mathcal{L}(\Sigma) $ whenever $uw,wv \in\mathcal{L}(\Sigma)$. A \emph{synchronized system} is an irreducible shift which has a synchronizing word. Any sofic is synchronized.
   
   A synchronized system  $\Sigma$ may be represented with a directed labeled graph called \emph{cover}. Points of $\Sigma$ is the recording of labels on infinite walks and the closure of all such points. Covers are not unique and there is a ``minimal" representation which is called \emph{Fischer cover}. 
   Any Fischer cover is right resolving: the edges starting at any vertex carry different labels. For sofics, the Fischer cover is a right resolving labeled graph with the fewest vertices among all covers. Also, a subshift is sofic iff it has a finite cover.

\section{Specification property and shadowing}\label{Sp and shadowing}

On the basis of Proposition \ref{prop closure}, we will concentrate on the study of a ``generalized" IFS given in \eqref{eq IFS}. First some definitions for adjusting the existing definitions to our notations.
\begin{defn} \label{def peridic}
	Let \ifs be an IFS.	A point $x\in X$ is periodic of period $p$   along $\sigma = \sigma _1 \sigma _2 \cdots \in \Sigma$ if for any $\ell\in \mathbb{N}$, $f_{\sigma_1\cdots \sigma _{\ell p}}(x)=x$. 	
	The point $x$ is weakly periodic along $ \sigma $, if there are infinitely many $i$ such that 
		$ f_{\sigma_1\cdots \sigma _{i}}(x)=x$.
	
\end{defn}  
Later in Section \ref{sec Periodicity in IFS }, we will consider another form of periodicity in NDS; nonetheless, the above definition suffices our needs for time being.
%%%%%%%%%%%%%%%%%%%%%%%%%%%%%%%%%%%%%%%%%%%%%%%%%%
\begin{defn}\label{defn Spi}
	Let $\mathfrak{I}=(X,\,\mathcal{F},\,\Sigma)$  be an IFS. Then $ \mathfrak{I} $ is said to have
	\begin{enumerate}
		\item 
		\emph{Specification property (SP)} along $\sigma\in\Sigma$, if for every $ \epsilon >0 $ there is $ N(\epsilon) $ such that for 
		%each $ \sigma\in \Sigma$,
		any $ x_1,\,x_2,\,\ldots ,\,x_s $  with $ s\geq 2 $ and any sequence $ 0=j_1\leq k_1<j_2\leq k_2 < \cdots < j_s\leq k_s $ of integers with $ j_{n+1}-k_n\geq N(\epsilon) $, $ n=1,\,2,\,\ldots,\,s-1 $, 
		there is a point $ x\in X $ such that for $ 1\leq m\leq s$ and 
		$j_m\leq i\leq k_m$
		\begin{equation}\label{eq mainSP}
			d(x,\,x_1)<\epsilon, \quad	d(f_{\sigma_{1}\cdots \sigma_i}(x),\,f_{\sigma_{1}\cdots \sigma_i}(x_m))<\epsilon.
		\end{equation}  
		\item
		\emph{Strong specification property (SSP)} along $\sigma$,
		if $x\in X$ satisfying \eqref{eq mainSP} is periodic.  
		\item\label{defn local Spi}
		\emph{ Local weak specification property (LWS)}
		along $\sigma\in\Sigma$, if for every $ \epsilon>0 $  there are $ N\in \N $ and $ \delta>0 $ such that if $ x_1,\,\ldots ,\, x_s \in X$ satisfies $ d(f_{\sigma_1\cdots\sigma_n}(x_k),\,x_{k+1})$ $<\delta $ with $ n\geq N $, then there is $ x\in X $ such that for  $0\leq k \leq s-1$ and any   $kn\leq j< (k+1)n$		
		\begin{equation}\label{eq mainSP2}
			d(x,\,x_1)<\epsilon, \quad d(f_{\sigma_1\cdots\sigma_j}(x),\,f_{\sigma_1\cdots\sigma_{j-kn}}(x_{k+1}))<\epsilon.
		\end{equation} 
	\end{enumerate} 
	An IFS has SP, SSP or LWS, if it has SP, SSP or LWS respectively along all $\sigma\in\Sigma$. 
	
\end{defn} 
Definitions of SP and LWS are adapted from \cite[Definition 2.6]{salman2020specification} and \cite[Definition 3.2]{vasisht2021specification} respectively and surely \eqref{eq mainSP} as well as \eqref{eq mainSP2} can be written in one equation, but this is done for compatibility of our notations to the rest of this note.
	\begin{defn}\label{defn along an orbit}
	Let \ifs  be an IFS and $U$, $V$ arbitrary non-empty open sets in $X$.
	Then $\mathfrak{I}$ is called \emph{mixing} (resp. \emph{exact}) \emph{along an orbit}  $\sigma\in\Sigma$, if there is $N\in\N$ such that for $n\geq N$, $f_{\sigma_1\cdots\sigma_n}(U)\cap V\neq\emptyset$ (resp. $f_{\sigma_1\cdots\sigma_n}(U)=X$).
\end{defn}

 By \cite[Theorem 2.1]{memarbashi2014notes} and similar to conventional dynamical systems we have 
\begin{prop}\label{prop spec mix}
	For a surjective $\mathfrak{I}=(X,\,\mathcal{F},\,\Sigma)$, specification property implies mixing along any orbit.
\end{prop}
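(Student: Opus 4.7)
The plan is to imitate the classical argument that (topological) specification plus surjectivity implies topological mixing, but carried out at the level of finite compositions $f_{\sigma_1\cdots\sigma_n}$ rather than iterates of a single map. Given non-empty open sets $U,V\subseteq X$ and $\sigma\in\Sigma$, I would first pick $x_1\in U$ and $v\in V$ together with $\epsilon>0$ small enough so that the open balls $B(x_1,\epsilon)\subseteq U$ and $B(v,\epsilon)\subseteq V$. Let $N=\max\{N(\epsilon),1\}$, where $N(\epsilon)$ is furnished by the specification property along $\sigma$.

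Next, for any fixed $n\geq N$, I would exploit surjectivity: since every $f_i\in\mathcal{F}$ is onto and the composition of onto maps is onto, $f_{\sigma_1\cdots\sigma_n}\colon X\to X$ is surjective, so there exists $x_2\in X$ with $f_{\sigma_1\cdots\sigma_n}(x_2)=v$. Now I would apply Definition \ref{defn Spi}(1) with $s=2$ and the index data $j_1=k_1=0$, $j_2=k_2=n$; the required gap condition $j_2-k_1=n\geq N(\epsilon)$ holds by the choice of $N$, and $k_1<j_2$ is ensured by $N\geq 1$. The conclusion \eqref{eq mainSP} then provides a point $x\in X$ satisfying $d(x,x_1)<\epsilon$ and $d(f_{\sigma_1\cdots\sigma_n}(x),f_{\sigma_1\cdots\sigma_n}(x_2))<\epsilon$, i.e., $x\in U$ and $f_{\sigma_1\cdots\sigma_n}(x)\in B(v,\epsilon)\subseteq V$. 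Thus $f_{\sigma_1\cdots\sigma_n}(U)\cap V\neq\emptyset$ for every $n\geq N$, which is precisely mixing along $\sigma$ in the sense of Definition \ref{defn along an orbit}.

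There is no serious obstacle; the proof is essentially a one-line unpacking of the specification property once one has \emph{both} of the following at hand: the surjectivity of the finite compositions (to realize an arbitrary target $v\in V$ as $f_{\sigma_1\cdots\sigma_n}(x_2)$), and the observation that the $m=1$ clause of SP, with $j_1=k_1=0$, is exactly a proximity statement $d(x,x_1)<\epsilon$ that places the shadowing point $x$ into $U$. The only mild bookkeeping concern is the strict inequality $k_1<j_2$ in the SP index chain, which is why I pass to $N=\max\{N(\epsilon),1\}$.
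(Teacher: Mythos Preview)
Your argument is correct and is precisely the standard one: the paper does not spell out a proof but simply invokes \cite[Theorem 2.1]{memarbashi2014notes} together with the remark that the argument is ``similar to conventional dynamical systems,'' and what you have written is exactly that conventional argument transcribed to the non-autonomous compositions $f_{\sigma_1\cdots\sigma_n}$. So there is no methodological difference to report.
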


\begin{rem}\label{Ex mix not spec}
	\begin{enumerate}
		\item 
		Any conventional mixing system which  does not have specification property, shows that the converse of the above proposition is not necessarily true.
		\item
		Let $X=\{a,\,b\}$ and $f:X\rightarrow X$ a continuous function defined as $f(a)=f(b)=a$. Then this conventional system has specification property, though it is not mixing. Thus this simple example shows that surjectivity in Proposition \ref{prop spec mix} cannot be omitted.
			\end{enumerate}
\end{rem}

%	If all conditions in Definition \ref{defn Spi} holds just for a $\sigma^0$ and not necessarily for all $\sigma\in\Sigma$, then we say that $\mathfrak{I}$ has \emph{specification property (respectively, SSP, LWS) along $\sigma^0$}. Thus the specification property for $\mathfrak{I}$ is equivalent to having specification property (respectively, SSP, LWS) along all $\sigma\in\Sigma$.
Next we will show that when $\Sigma$ is irreducible, specification property for $\mathfrak{I}$ is equivalent to having this property just along  one transitive point in $\Sigma$;
{in particular, the definition
	 of specification property can then be reduced to specification along a transitive $t\in\Sigma_{|\mathcal{F}|}$ 
	 \cite[Definition 4.2]{ghane2019topological}.
}

\begin{prop}\label{prop tranSP}
	Assume $\mathfrak{I}=(X,\,\mathcal{F},\,\Sigma)$ is surjective and has (strong) specification property along a transitive $t=t_1t_2t_3\cdots\in\Sigma$. Then $\mathfrak{I}$ has (strong) specification property along any orbit.
\end{prop}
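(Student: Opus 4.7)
My plan is to transfer specification data from $\sigma$ into data for $t$, exploiting density of the $\tau$-orbit of $t$ in $\Sigma$ (the meaning of transitivity of $t$) and the surjectivity hypothesis to extract preimages, then push the witness forward by $f_{t_1\cdots t_{i_0}}$. Setting $N_\sigma(\epsilon):=N_t(\epsilon)$, and given $x_1,\dots,x_s$ with $0=j_1\le k_1<\cdots<j_s\le k_s$ and gaps $j_{m+1}-k_m\ge N_\sigma(\epsilon)$, transitivity of $t$ produces some $i_0\ge 0$ with $\sigma_1\cdots\sigma_{k_s}=t_{i_0+1}\cdots t_{i_0+k_s}$, so that $f_{\sigma_1\cdots\sigma_{i'}}=f_{t_{i_0+1}\cdots t_{i_0+i'}}$ for $0\le i'\le k_s$; surjectivity of the finite composition $f_{t_1\cdots t_{i_0}}$ then furnishes $\tilde x_m\in f_{t_1\cdots t_{i_0}}^{-1}(x_m)$ for every $m$.

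Applying SP along $t$ to the data $\tilde x_1,\dots,\tilde x_s$ with translated intervals $[0,\,i_0+k_1]$, $[i_0+j_2,\,i_0+k_2],\dots,[i_0+j_s,\,i_0+k_s]$---whose gaps are exactly the original $j_{m+1}-k_m\ge N_t(\epsilon)$---produces $y\in X$ with
\[
d(f_{t_1\cdots t_i}(y),\,f_{t_1\cdots t_i}(\tilde x_m))<\epsilon
\]
on each of these intervals. Setting $x:=f_{t_1\cdots t_{i_0}}(y)$, the prefix agreement together with $f_{t_1\cdots t_{i_0}}(\tilde x_m)=x_m$ rewrites both sides, for $j_m\le i'\le k_m$ and $i=i_0+i'$, as $f_{\sigma_1\cdots\sigma_{i'}}(x)$ and $f_{\sigma_1\cdots\sigma_{i'}}(x_m)$ respectively, yielding the required $\sigma$-inequalities; the case $m=1$, $i'=0$ covers $d(x,x_1)<\epsilon$.

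For SSP, the same construction applies with $y$ now periodic along $t$ of some period $p$; choosing $i_0$ to be a multiple of $p$ forces $x=y$, and the $t$-periodicity of $y$ (in the form $f_{t_{kp+1}\cdots t_{(k+\ell)p}}(y)=y$ for all $k,\ell\ge 0$) transfers through the prefix agreement to give $f_{\sigma_1\cdots\sigma_{\ell p}}(x)=x$ whenever $\ell p\le k_s$. The difficulty I anticipate, and the real hinge of the SSP case, is promoting this \emph{finite} periodicity to genuine periodicity of $x$ along $\sigma$: infinite prefix agreement is available only when $\sigma$ lies in the forward $\tau$-orbit of $t$, not merely in its closure, so one will need either a diagonal/compactness argument (with $i_{0,n}$ giving longer and longer agreements, periodic witnesses $y_n$, and an extracted limit of $x_n=f_{t_1\cdots t_{i_{0,n}}}(y_n)$) or a combinatorial refinement of transitivity ensuring that each admissible prefix occurs in $t$ at positions in every residue class modulo $p$, so that a single $i_0$ meeting both requirements can be produced.
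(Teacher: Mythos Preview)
Your SP argument is correct and is essentially the paper's proof. The only cosmetic difference is how the constraint $j_1=0$ is handled after translating the intervals by $i_0$: the paper inserts an extra dummy point $y_0$ with its own interval $[0,\,\ell-N]$ in front of the translated data, whereas you stretch the first translated interval back to $0$. Both devices do the same job, and your version is marginally cleaner.

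For SSP the paper offers nothing beyond the sentence ``Similar proof works for SSP as well,'' so you are already doing more than the paper by isolating where the argument is delicate. Two comments on your SSP sketch:

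\begin{itemize}
\item There is a circularity in ``choosing $i_0$ to be a multiple of $p$.'' The period $p$ is a property of the witness $y$ produced by SSP along $t$, and $y$ depends on the data you feed in---which in turn depends on $i_0$. You cannot choose $i_0$ in terms of $p$ before $p$ exists. (If one reads SSP in the classical Bowen sense, where the period can be \emph{prescribed} as any integer $\ge k_s+N(\epsilon)$, this circularity disappears; the paper's Definition~\ref{defn Spi}(2), however, does not say that.)
\item You are right that the real obstacle is promoting the finitely many equalities $f_{\sigma_1\cdots\sigma_{\ell p}}(x)=x$, $\ell p\le k_s$, to all $\ell\in\N$. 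Neither of your proposed fixes is clearly sufficient as stated: a limit of periodic points need not be periodic, so the compactness/diagonal route does not by itself yield a periodic $x$; and the ``every residue class modulo $p$'' claim can fail in a general irreducible subshift (think of $\Sigma=\{(01)^\infty,(10)^\infty\}$, where the symbol $0$ occurs in $t=(01)^\infty$ only at odd positions). So if SSP is to go through in the generality claimed, some further input---most naturally, a prescribed-period form of SSP along $t$---seems to be needed.
\end{itemize}
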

\begin{proof}
	We will   prove that SP holds along any  $\sigma=\sigma_1\sigma_2\cdots\in \Sigma$. Similar proof works for SSP as well.
	
	For a given $\epsilon>0$, let $N=N(\epsilon)$ be the natural number  provided by the definition of specification property along $t$. We will show that this $N$ works as well for the required definition of specification property along $\sigma$.
	To this end fix $\sigma\in \Sigma$ and choose any sequence $x_1,\,x_2,\ldots, x_s$ in $X$ for $s\geq 2$ and consider the arbitrary sequence
	\begin{equation}\label{eq arbSeq}
		0=j_1\leq k_1<j_2\leq k_2 < \cdots < j_s\leq k_s 
	\end{equation}
	where $|j_{i+1}-k_i|\geq N$.
	Then choose $\ell\in\N$ so that
	$k'_0=\ell-N\geq 0$ and $t_{\ell+j}=\sigma_j$
	for $0\leq j\leq k_s$. This is possible, for $t$ is a transitive point in $\Sigma$. 
	Replace \eqref{eq arbSeq} with
	\begin{equation}\label{eq arbSq2}
		0=j'_0\leq k'_0< j'_1\leq k'_1<j'_2\leq k'_2 < \cdots < j'_s\leq k'_s 
	\end{equation}
	where $j'_i=j_i+\ell$ and $k'_i=k_i+\ell$ for $1\leq i\leq s$.
	Set  {$u=t_1 t_2 \cdots t_{\ell-1}$} and pick $y_i\in X$ so that $f_u(y_i)=x_i$, $1\leq i\leq s$ and let $y_0$ be any point in $X$.
	
	Now apply the specification property along $t$ for
	the above $\epsilon>0$, the sequence $y_0,\,y_1,\,y_2,\ldots,y_s$ and sequence \ref{eq arbSq2} to have a point $x'\in X$ such that
	\begin{equation*}
		d(x',\,y_0)<\epsilon,\quad	d(f_{t_{1}\cdots t_i}(x'),\,f_{t_{1}\cdots t_i}(y_m))<\epsilon;\quad   0\leq m\leq s,\quad  j'_m\leq i\leq k'_m.
	\end{equation*}
	This in turn implies that
	\begin{equation*}
		d(f_{\sigma_{1}\cdots \sigma_i}(f_u(x')),\,f_{\sigma_{1}\cdots \sigma_i}(f_u(y_m))<\epsilon;\quad 1\leq m\leq s ,\quad   j_m\leq i\leq k_m.
	\end{equation*}
	Hence for $x=f_u(x')$, we have   
	\begin{equation*}
		d(x,\,x_1)<\epsilon,\quad(f_{\sigma_{1}\cdots \sigma_i}(x),\,f_{\sigma_{1}\cdots \sigma_i}(x_m))<\epsilon 
	\end{equation*}
	for each $ 1\leq m\leq s $ and any $  j_m\leq i\leq k_m $. Thus specification property holds along $\sigma$ and
	by the fact that $\sigma$ was arbitrary, $\mathfrak{I}$ has specification property  as required.
\end{proof}

Thus if $\mathfrak{I}=(X,\,\mathcal{F},\,\Sigma)$ has SSP along a transitive $t\in\Sigma$, then the periodic points are dense along any $\sigma\in \Sigma$ and in particular, the dynamics along all orbits have Devaney chaos \cite{tian2006chaos}. Equivalently,
\begin{cor}
	If a surjective NDS such as \eqref{eq NDS} has SSP, then $f_{\sigma_1,\,\infty}$ has Devaney chaos for any $\sigma$  in the limit set of ${\mathcal{O}^+(t)}$.
\end{cor}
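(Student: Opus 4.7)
The proof should be short because almost all the work has already been done in the preceding propositions and the remark right before the corollary. Here is my plan.

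First I would reduce the statement to a property of an arbitrary orbit via Proposition \ref{prop tranSP}. Since $\Sigma=\overline{\mathcal{O}^+(t)}$, the $\tau$-orbit of $t$ is dense in $\Sigma$, so every admissible word of $\Sigma$ occurs in $t$ infinitely often, i.e.\ $t$ is a transitive point of $\Sigma$. Then Proposition \ref{prop tranSP} (in its SSP form) upgrades SSP along $t$ to SSP along every $\sigma\in\Sigma$. Thus for each $\sigma$ in the limit set of $\mathcal{O}^+(t)$, the NDS $f_{\sigma_1,\infty}$ itself enjoys SSP.

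Next I would extract the two ingredients of Devaney chaos along $\sigma$. For \emph{density of periodic points}, take any $y\in X$ and $\varepsilon>0$, choose $N=N(\varepsilon)$ from SSP, set $s=2$, $x_1=x_2=y$, and pick any sequence $0=j_1\leq k_1<j_2\leq k_2$ with $j_2-k_1\geq N$. SSP produces $x\in X$ with $d(x,y)=d(x,x_1)<\varepsilon$, and by definition of SSP this shadowing $x$ is periodic along $\sigma$ in the sense of Definition \ref{def peridic}; hence the periodic points along $\sigma$ are dense in $X$. For \emph{transitivity}, SSP trivially implies SP, and by Proposition \ref{prop spec mix} (which uses surjectivity) this yields mixing along $\sigma$, in particular transitivity.

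Finally, having transitivity plus dense periodic points along each $\sigma$, I would invoke the result of \cite{tian2006chaos} stating that these two properties imply Devaney chaos of $f_{\sigma_1,\infty}$ (sensitivity following automatically once $X$ is infinite, which may be assumed without loss of generality, as the finite case is degenerate).

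There is no real obstacle here; the only points that need a line of care are (i) that the transitive-point hypothesis of Proposition \ref{prop tranSP} holds for $t$ inside $\overline{\mathcal{O}^+(t)}$, and (ii) that the ``periodic'' adjective in SSP matches Definition \ref{def peridic} so that the shadowing point produced in the density step is actually periodic along $\sigma$. Both are immediate from the definitions.
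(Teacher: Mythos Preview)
Your proposal is correct and follows essentially the same route as the paper's own proof: invoke Proposition~\ref{prop tranSP} to pass from SSP along $t$ to SSP along every $\sigma\in\overline{\mathcal{O}^+(t)}$, then read off dense periodic points directly from SSP and transitivity from Proposition~\ref{prop spec mix} (via mixing), finally appealing to \cite{tian2006chaos}. The only difference is that you spell out the density-of-periodic-points step explicitly, whereas the paper simply asserts it.
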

\begin{proof}
	This follows from Proposition \ref{prop tranSP} and the fact that periodic points are dense in systems with SSP and also the fact that such systems are mixing and thus transitive.
\end{proof}

 So far we have shown that 
	 specification and strong specification along a transitive orbit implies the same property along all other orbits. The rest of this section is devoted to prove similar  results for shadowing and LWS.

\begin{defn}
	%	\cite{memarbashi2014notes}
	The sequence $ \{x_0,\,x_1,\, \ldots\} \subseteq X $ is a\emph{ $ \delta $-pseudo orbit along $ \sigma=\sigma_1\sigma_2\cdots\in \Sigma $} for some $ \delta>0 $, if $ d(f_{\sigma_i}(x_{i-1}),\,x_i)<\delta $ for all $ i\geq 1 $. We say that \ifs has the \emph{shadowing property along $ \sigma $} if for every $ \epsilon >0 $ there exists $ \delta=\delta(\epsilon) $ such that for every $ \delta $-pseudo orbit $ \{x_0,\,x_1,\,\ldots\} $ along $ \sigma $ there is $ y\in X $ such that 
	\begin{equation*}
		d(y,\,x_0)<\epsilon, \quad d(f_{\sigma_1\cdots\sigma_i}(y),\,x_i)<\epsilon, \quad\text{ for all } i\geq 1 .
	\end{equation*}
\end{defn}

%%%%%%%%%%%%%%%%%
%%%%%%%%%%%%%%%%%%%%%%%
\begin{prop}\label{prop shadowing} 
	Assume $\mathfrak{I}=(X,\,\mathcal{F},\,\Sigma)$ is surjective and has shadowing along a transitive $t\in\Sigma$. Then $\mathfrak{I}$ has shadowing along any orbit.
\end{prop}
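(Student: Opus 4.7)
The plan is to imitate the strategy of Proposition \ref{prop tranSP}, but with an added compactness argument. Given $\epsilon>0$, let $\delta=\delta(\epsilon/2)$ be the constant witnessing shadowing of $t$ at the level $\epsilon/2$. I will show that this same $\delta$ works for shadowing along $\sigma=\sigma_1\sigma_2\cdots\in\Sigma$.

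First I would take an arbitrary $\delta$-pseudo orbit $\{x_0,x_1,x_2,\ldots\}$ along $\sigma$ and, for each $n\in\N$, use transitivity of $t$ to pick $\ell_n$ with $t_{\ell_n}t_{\ell_n+1}\cdots t_{\ell_n+n-1}=\sigma_1\sigma_2\cdots\sigma_n$. Writing $u_n=t_1t_2\cdots t_{\ell_n-1}$, surjectivity lets me choose $y_n\in X$ with $f_{u_n}(y_n)=x_0$. I then form the sequence obtained by concatenating the true orbit $y_n,f_{t_1}(y_n),\ldots,f_{u_n}(y_n)=x_0$, then $x_1,\ldots,x_n$, then the true orbit under $t$ emanating from $x_n$. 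The first and last segments are genuine orbits along $t$, and the middle segment has jumps $d(f_{t_{\ell_n+j-1}}(x_{j-1}),x_j)=d(f_{\sigma_j}(x_{j-1}),x_j)<\delta$, so the whole sequence is an infinite $\delta$-pseudo orbit along $t$.

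Shadowing along $t$ then supplies $w_n\in X$ tracking this pseudo orbit within $\epsilon/2$. Setting $\zeta_n=f_{u_n}(w_n)$, the condition at position $\ell_n-1$ reads $d(\zeta_n,x_0)<\epsilon/2$, while at position $\ell_n-1+j$ with $1\le j\le n$ it yields
\[
d\bigl(f_{\sigma_1\cdots\sigma_j}(\zeta_n),\,x_j\bigr)=d\bigl(f_{t_1\cdots t_{\ell_n-1+j}}(w_n),\,x_j\bigr)<\epsilon/2,
\]
using the identification $t_{\ell_n}\cdots t_{\ell_n-1+j}=\sigma_1\cdots\sigma_j$. By compactness of $X$, extract a subsequence $\zeta_{n_k}\to z$. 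For each fixed $j$, continuity of $f_{\sigma_1\cdots\sigma_j}$ together with the strict inequality $d(f_{\sigma_1\cdots\sigma_j}(\zeta_{n_k}),x_j)<\epsilon/2$ (valid once $n_k\ge j$) passes to the limit and gives $d(f_{\sigma_1\cdots\sigma_j}(z),x_j)\le\epsilon/2<\epsilon$, and likewise $d(z,x_0)\le\epsilon/2<\epsilon$. Hence $z$ is the required $\epsilon$-shadowing point along $\sigma$.

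The main obstacle is that $\sigma$ need only lie in the closure $\overline{\mathcal{O}^+(t)}$ rather than in the orbit itself, so the finite matching supplied by transitivity cannot be extended to an infinite one in a single shot; this is what forces the diagonal-style extraction of a limit point $z$ and the slack built in by choosing $\delta(\epsilon/2)$ instead of $\delta(\epsilon)$. Surjectivity plays a dual role: it is needed both to create the ``prehistory'' $y_n$ bridging from a true initial segment of $t$ to $x_0$, and (implicitly, via the existence of $f_{u_n}(w_n)$) to guarantee that $\zeta_n$ lives inside $X$ so that compactness can be invoked.
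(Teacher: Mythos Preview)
Your proof is correct and follows essentially the same route as the paper's: both locate an occurrence of $\sigma_1\cdots\sigma_n$ inside $t$, use surjectivity to pull back $x_0$ along the prefix of $t$, splice together a $\delta$-pseudo orbit along $t$ from a true initial segment, the finite $\sigma$-pseudo orbit, and a true tail, apply $\epsilon/2$-shadowing along $t$, push the shadowing point forward to the start of the $\sigma$-block, and finally extract a limit via compactness. The only cosmetic difference is notation (the paper writes $\underline{t}_n,\,\underline{x}_n,\,z_n,\,y_n$ for your $u_n,\,y_n,\,w_n,\,\zeta_n$); your closing remark that surjectivity is needed for $\zeta_n=f_{u_n}(w_n)$ to ``live inside $X$'' is unnecessary, since that image exists regardless, but this does not affect the argument.
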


\begin{proof}
	
		Let $\sigma=\sigma_1\sigma_2\cdots$ be arbitrary
		and for $n\in\N$, let 
	$\zeta_n=\sigma_1\sigma_2\cdots\sigma_n$. Since $t$ is transitive and any $\zeta_n$ appears infinitely many times along $t$, one may write $t$ as
	\begin{equation}\label{eq t}
		t=t_1\cdots \zeta_1\cdots\zeta_2\cdots\zeta_n\cdots\zeta_{n+1}\cdots=
		t_1\cdots \zeta_1\cdots\zeta_2\cdots\sigma_1\sigma_2\cdots\sigma_n\cdots\zeta_{n+1}\cdots.
	\end{equation}
	
	Set $\underline{t}_n=t_1\cdots t_{|\underline{t}_n|}$ to be the initial finite block  of $t$ before $\zeta_n$ and $\bar{t}_n$ the infinite block after $\zeta_n$.
	Thus 
	$$t=t_1\cdots\zeta_n\cdots=\underline{t}_n\zeta_n\bar{t}_n.$$
	Let $\epsilon>0$ and
	let  $\delta>0$ be provided by the definition of $\frac{\epsilon}{ 2}$-shadowing for $t$ and let $\alpha:=\{x_i\}_{i\in\N}$ be a $\delta$-pseudo orbit along $\sigma$.
	Also for any $n$, let $\underline{x}_n$ be a point such that $f^{-1}_{\underline{t}_n}(\underline{x}_n)=x_1$.
	
	Denote by $\mathcal{S}_u(x)$  the sequence representing the true orbit of $x$ along $u$ where $u$ is a finite or infinite block in $\mathcal{L}(\Sigma)$. Then the sequence  
	$$\gamma_n:=\mathcal{S}_{\underline{t}_n}(\underline{x}_n),\,x_1,\,x_2,\,\ldots,\,x_{|\zeta_n|} ,\,\mathcal{S}_{\bar{t}_n}(x_{|\zeta_n|}) $$
	is a $\delta$-pseudo orbit along $t$. 
	Hence there is a $z_n\in X$ tracing $\gamma_n$ within an $\frac{\epsilon}{2}$ neighborhood of this {trajectory} along $t$ and set $y_n=f_{\underline{t}_n}(z_n)$. As a result we will have 
	\begin{equation}\label{eq shadow}
		{d(y_n,\,x_1)<\frac{\epsilon}{2}, \quad d(f_{\sigma_1\cdots\sigma_{n}}(y_{n}),\,x_{n+1})<
			\frac{\epsilon}{2},\qquad  1<n<|\zeta_n|.}
	\end{equation}
	Let $y$ be a limit point of $\{y_n\}$ and without loss of generality assume that $\{y_n\}$ converges to $y$. Pass to the limit in \eqref{eq shadow} to have 
	%	\end{equation}
	\begin{equation*}%\label{eq shadow}
		{d(y,\,x_1)<\epsilon, \quad d(f_{\sigma_1\cdots\sigma_{n}}(y),\,x_{n+1})<\epsilon,\qquad  n>1.}
	\end{equation*}
	Now this implies that $\alpha$ is $\epsilon$-shadowed by  $y$ along $\sigma$.
\end{proof}

	There are examples that shadowing occurs along  orbits which are not transitive. 
\begin{example}
Let $\mathfrak{I}=(X=[0,\,1],\,\{f_0,\,f_1\},\,\Sigma_F)$ where $f_0(x)=2x\mod 1$ and $f_1$ any homeomorphism say $f_1(x)=x$.
	Then if $\sigma$ terminates at $1^\infty$  i.e. if
	$\sigma=\sigma_1\cdots\sigma_n1^\infty$ for some $n\in\N$, then $\mathfrak{I}$ along $\sigma$ does not have shadowing property while  along all those $\sigma\in\Sigma$ terminating at $0^\infty$, $\mathfrak{I}$ has shadowing property. 
\end{example}

	Authors in  \cite[Theorem 4.1]{vasisht2021specification} prove that LWS along $ \sigma $ is equivalent to shadowing property along $ \sigma $. This implies the following.
\begin{cor}
	Let \ifs be a surjective IFS. Then LWS  along a transitive $t\in\Sigma$ implies LWS along any $\sigma\in\Sigma$.
\end{cor}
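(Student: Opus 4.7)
The plan is to chain together the cited equivalence from \cite{vasisht2021specification} with Proposition \ref{prop shadowing}. By \cite[Theorem 4.1]{vasisht2021specification}, for any $\sigma\in\Sigma$ the LWS property along $\sigma$ is equivalent to the shadowing property along $\sigma$. So LWS along a transitive $t$ translates immediately into shadowing along $t$.

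Next, I would invoke Proposition \ref{prop shadowing}: since $\mathfrak{I}$ is surjective and has shadowing along the transitive point $t$, it has shadowing along every $\sigma\in\Sigma$. Applying the Vasisht--Das equivalence a second time in the reverse direction, shadowing along $\sigma$ gives LWS along $\sigma$, which is what we need.

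Schematically, the argument is the two-step implication
\begin{equation*}
\text{LWS along }t \;\Longleftrightarrow\; \text{shadowing along }t \;\Longrightarrow\; \text{shadowing along }\sigma \;\Longleftrightarrow\; \text{LWS along }\sigma,
\end{equation*}
where the outer equivalences are \cite[Theorem 4.1]{vasisht2021specification} and the middle implication is Proposition \ref{prop shadowing}.

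Since the hard work has already been done in Proposition \ref{prop shadowing} (where the transitivity of $t$ is used to embed an arbitrary $\delta$-pseudo orbit along $\sigma$ into a $\delta$-pseudo orbit along $t$, shadow it, and then project back via a compactness argument), the only thing to check here is that the hypotheses of \cite[Theorem 4.1]{vasisht2021specification} transfer to our IFS setting, i.e.\ that the shadowing and LWS definitions used there agree with Definition \ref{defn Spi}(\ref{defn local Spi}) and the shadowing definition above along a fixed $\sigma$. This is the only mild obstacle, and it is essentially a notational verification since our definitions were adapted from that paper.
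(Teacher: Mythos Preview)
Your argument is correct and is exactly the one the paper uses: it simply combines the equivalence of LWS and shadowing along any fixed $\sigma$ from \cite[Theorem 4.1]{vasisht2021specification} with Proposition \ref{prop shadowing}. There is nothing to add.
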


Consider the set of properties 
\begin{equation}\label{eq Gamma} 
	\Gamma=\{\text{SP },\text{SSP }, \text{LWS}, \text{ shadowing}\}
\end{equation}
and let $t\in\Sigma$. Then proof of propositions \ref{prop tranSP} and \ref{prop shadowing} allows us to restate those propositions as: If \ifs has property $\gamma\in\Gamma$ along $t$, then $\mathfrak{I}$ has property $\gamma$ along $\sigma$ in the limit set of $\mathcal{O}^+(t)$.
In particular, since we never used the property that $\Sigma$ is generated by a finite set of characters, thus we have
\begin{cor}
Let $\Gamma$ be as in \eqref{eq Gamma}, $f_{t_1,\,\infty}$ as in \eqref{eq NDS} defined with possibly infinitely many surjective functions $f_{t_i}:X\to X$ having property $\gamma\in\Gamma$. Then, $f_{\sigma_1,\,\infty}$  has property $\gamma$ for any $\sigma$ in the limit set of $\mathcal{O}^+(t)$.
\end{cor}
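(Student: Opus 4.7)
The plan is to observe that the arguments for Propositions \ref{prop tranSP} and \ref{prop shadowing} never actually invoke finiteness of the alphabet $\mathcal{A}$, so they transfer essentially verbatim once the right topology on $\mathcal{A}^{\mathbb{N}}$ is fixed. The two ingredients those proofs use are: (i) surjectivity of each $f_{t_i}$, which is assumed; and (ii) for every finite initial block $\sigma_1\cdots\sigma_n$ of $\sigma$, there exists $\ell\in\mathbb{N}$ with $t_{\ell+j}=\sigma_j$ for $0\le j\le n$. Ingredient (ii) is exactly the condition that $\sigma$ lies in the $\omega$-limit set of $t$, and makes no reference to the cardinality of $\mathcal{A}$.

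With (i) and (ii) in hand, I first rerun the proof of Proposition \ref{prop tranSP} for SP (and its SSP variant): given $\epsilon>0$, points $x_1,\dots,x_s$ and integers $0=j_1\le k_1<\cdots<j_s\le k_s$ with gaps at least $N(\epsilon)$, use (ii) to pick $\ell$ with $\ell-N\ge 0$ and $t_{\ell+j}=\sigma_j$ for $0\le j\le k_s$, use (i) to choose $y_i$ with $f_{t_1\cdots t_{\ell-1}}(y_i)=x_i$, apply SP along $t$ to the shifted data, and finally push the resulting tracing point forward by $f_{t_1\cdots t_{\ell-1}}$. Next I rerun the proof of Proposition \ref{prop shadowing}: splice any $\delta$-pseudo orbit along $\sigma$ into a $\delta$-pseudo orbit along $t$ around successive occurrences of $\zeta_n=\sigma_1\cdots\sigma_n$ in $t$, shadow along $t$, and extract a limit point of the tracing points $y_n=f_{\underline{t}_n}(z_n)$ to obtain a tracing point for the original pseudo orbit along $\sigma$. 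The LWS case then follows from shadowing via \cite[Theorem 4.1]{vasisht2021specification}, completing all four cases in $\Gamma$.

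The only step requiring genuine care, and thus the main obstacle, is specifying the topology on $\mathcal{A}^{\mathbb{N}}$ that defines ``the limit set of $\mathcal{O}^+(t)$'' when $\mathcal{A}$ is infinite. The natural choice is the product topology with $\mathcal{A}$ discrete; equivalently, one may identify $a\leftrightarrow f_a$ and restrict the metric $\rho$ on $\mathbf{N}$ to shifts of $t$. Under either convention, convergence $\tau^{n_k}(t)\to\sigma$ is equivalent to agreement on arbitrarily long initial segments, which is precisely ingredient (ii). Once this is settled the preceding arguments go through unchanged, yielding property $\gamma$ along every $\sigma$ in the limit set of $\mathcal{O}^+(t)$.
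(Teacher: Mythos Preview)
Your proposal is correct and follows exactly the paper's own reasoning: the paper simply remarks that the proofs of Propositions \ref{prop tranSP} and \ref{prop shadowing} never use finiteness of $\mathcal{A}$, and you have spelled out precisely why, isolating the two ingredients (surjectivity and occurrence of arbitrarily long initial segments of $\sigma$ in $t$) and handling LWS via the cited equivalence with shadowing. Your extra paragraph on the topology of $\mathcal{A}^{\mathbb{N}}$ is a welcome clarification the paper glosses over, but it does not change the approach.
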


 For $\gamma\in\Gamma$ in \eqref{eq Gamma} define 
$$S_\gamma=S_\gamma(\mathfrak{I})=\{\sigma\in\Sigma:\;\mathfrak{I}\text{ has }\gamma \text{ property along }\sigma\}.$$
\begin{prop}\label{prop all3}
	Assume \ifs  is a surjective IFS and $\Sigma$ an irreducible subshift. 
	Then, either $S_\gamma=\Sigma$ or $S_\gamma$ is a meager set.
\end{prop}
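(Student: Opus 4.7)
The plan is to exploit the dichotomy already established in Propositions~\ref{prop tranSP} and \ref{prop shadowing} (and the LWS corollary derived from \cite[Theorem 4.1]{vasisht2021specification}): each of these shows that as soon as a single transitive point of $\Sigma$ belongs to $S_\gamma$, then $S_\gamma=\Sigma$. So the proof will split into two cases depending on whether $S_\gamma$ contains a transitive point, and in the remaining case I will argue that $S_\gamma$ lives inside the complement of the set of transitive points, which is meager.

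First I would fix notation by letting $T\subseteq\Sigma$ denote the set of transitive points of $\tau:\Sigma\to\Sigma$. Since $\Sigma$ is an irreducible subshift in a compact metric space, $T$ is a dense $G_\delta$: picking a countable basis $\{U_k\}_{k\in\N}$ of non-empty open subsets of $\Sigma$, one has
\[
T=\bigcap_{k\in\N}\bigcup_{n\in\N_0}\tau^{-n}(U_k),
\]
each set $\bigcup_n \tau^{-n}(U_k)$ is open, and it is dense because any non-empty open $V\subseteq\Sigma$ contains a transitive point (irreducibility together with density of $T$ in $\Sigma$, which in turn follows from iterating the transitive point of $\Sigma$) whose forward orbit meets $U_k$. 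By Baire category $T$ is dense in $\Sigma$ and its complement $\Sigma\setminus T$ is meager.

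Next I would execute the dichotomy. If $S_\gamma\cap T\neq\emptyset$, pick any transitive $t\in S_\gamma$; then Proposition~\ref{prop tranSP} (for $\gamma\in\{\text{SP},\text{SSP}\}$), Proposition~\ref{prop shadowing} (for $\gamma=$ shadowing), and the corollary that LWS along a transitive point propagates to all orbits (for $\gamma=\text{LWS}$), together with the surjectivity hypothesis on $\mathfrak{I}$, yield property $\gamma$ along every $\sigma\in\Sigma$, hence $S_\gamma=\Sigma$. Otherwise $S_\gamma\cap T=\emptyset$, so $S_\gamma\subseteq\Sigma\setminus T$, which is meager, and therefore $S_\gamma$ itself is meager.

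The only non-routine step is the first one, namely certifying that $T$ is a dense $G_\delta$ in an irreducible subshift; everything else is a bookkeeping application of the propagation results already proved in this section. There is no obstacle in extending the conclusion uniformly to all four members of $\Gamma$, since the propagation statement is valid for each of them separately.
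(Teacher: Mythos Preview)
Your argument is correct and is exactly the approach of the paper: invoke Propositions~\ref{prop tranSP} and~\ref{prop shadowing} (and the LWS corollary) to conclude that $S_\gamma=\Sigma$ whenever $S_\gamma$ meets the set $T$ of transitive points, and otherwise observe that $S_\gamma\subseteq\Sigma\setminus T$ is meager since $T$ is a dense $G_\delta$. The paper's proof is just a one-sentence summary of this reasoning (``transitive points are generic and the intersection of two generic sets is again generic''); your write-up simply unpacks the details.
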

\begin{proof}
	The proof is  a result of propositions \ref{prop tranSP}, \ref{prop shadowing} and the fact that the transitive points in $\Sigma$ are generic and intersection of two generic sets is again generic.
\end{proof}

In the following example, we show that the same conclusion in propositions \ref{prop tranSP} and \ref{prop shadowing} does not hold if one considers exact or mixing.

\begin{example}
	Let $\mathfrak{I}=([0,\,1],\,\{f_0,\,f_1\},\,\Sigma_2)$ where  $f_0$ is identity 
	and $f_1(x)=2x\mod 1$ on  $[0,\,1]$. This is not hard to see that $\mathfrak{I}$ is exact along any transitive point $t\in\Sigma_2$; however since $f_0$ is not transitive, $\mathfrak{I}$ is not exact  and so mixing 
	along $\sigma=0^\infty\in\Sigma_2$.
\end{example}

\section{Periodicity in non-autonomous systems and IFS} \label{sec Periodicity in IFS }

%\blue{
The notion of a periodic point in the case of a conventional dynamical system $ (X,\,f) $ is very natural and intuitive. This is not the case for an IFS or a non-autonomous system.
One may check \cite{pravec2019remarks} where a survey of the periodic points for a non-autonomous system is offered. 

The most prevalence definition of a periodic point for an NADS is the one given in  Definition \ref{def peridic}. Though this lacks the fact that if $x$ is periodic for $f_{\sigma_1,\,\infty}$, then $f_{\sigma_1}(x)$ is not necessarily periodic which is fairly  unintuitive with respect to a conventional dynamical system. 

\begin{defn}\label{def orbitdefn}
	A point $x\in X$ is
	\begin{enumerate}
		\item 
		\emph{regularly periodic}
		for $f_{\sigma_1,\,\infty}$ of period $p$, if for any $ i\geq 1 $ 
		$$ f_{\sigma_1\cdots \sigma_i}(x)=f_{\sigma_1\cdots\sigma_{i+p}} (x).$$
		\item
		\emph{orbitally periodic}
		for $f_{\sigma_1,\,\infty}$ of period $p$, 
		if there is a non-empty word $u$, $|u|=p$ such that $\sigma=u^\infty$ and
		$x$ is regularly periodic of period $p$.
		Thus $x$ is orbitally periodic for  $f_{\sigma_1,\,\infty}$ iff $(\sigma,\,x)\in{\bf S}$ is a periodic point of ${\bf F}$ in \eqref{eq bf S}.
	
	\end{enumerate}
	
\end{defn}\begin{center}

\end{center}
We have the following implications
\begin{eqnarray*}
	\text{orbitally periodic }&\Rightarrow& \text{ regularly periodic }\Rightarrow 
	\text{ periodic (Definition \ref{def peridic}) }\\
	&\Rightarrow& \text{ weakly periodic  (Definition \ref{def peridic})}.
\end{eqnarray*}
It is not hard to give examples to show that none of the above implications are reversible.
We give an example for one of them and the others can be also easily constructed.

\begin{example}
	Let $X$ be the unit circle and $\mathcal{F}=\{f_0,\,f_1\}$ where $f_0(x)$ is the rotation by an irrational $2\theta$ and $f_1(x)$ is the rotation by $-\theta$ and
	$$\sigma=011\cdots \overbrace{00\ldots0}^\text{$n$ times}
	\overbrace{11\ldots\ldots 1}^\text{$2n$ times}
	00\cdots.$$
	Then any point is weakly periodic but not periodic.
\end{example}

For all sorts of periodicity defined in  definitions \ref{def peridic} and  \ref{def orbitdefn}, the eventual periodicity  can be defined accordingly. For instance, a point $x\in X$ is eventually weakly periodic if there is $i$ such that 
$y=f_{\sigma_1\cdots\sigma_i}(x)$ is weakly periodic.

\begin{prop}
	Suppose that $x$ is periodic for $f_{\sigma_1,\,\infty}$ where $\sigma_i\in \mathcal{A}=\{0,\,1,$ $\,\ldots,k-1\}$.
	Then for any $j$, $y=f_{\sigma_1\cdots\sigma_j}(x) $ is eventually weakly periodic.
\end{prop}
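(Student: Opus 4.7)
The plan is to exploit the finiteness of $\mathcal{A}$ together with the periodicity of $x$ to show that the forward orbit $\mathcal{O}^+(x)=\{f_{\sigma_1\cdots\sigma_m}(x):m\geq 0\}$ is a finite subset of $X$, and then to extract a weakly periodic tail point by pigeonhole.

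For the first step I would write an arbitrary $m\geq 1$ as $m=\ell p+r$ with $0\leq r<p$. Periodicity of $x$ gives $f_{\sigma_1\cdots\sigma_{\ell p}}(x)=x$, so
\[
f_{\sigma_1\cdots\sigma_m}(x)=f_{\sigma_{\ell p+1}\cdots\sigma_{\ell p+r}}(x);
\]
every point of $\mathcal{O}^+(x)$ therefore has the form $f_w(x)$ for an admissible word $w\in\mathcal{L}(\Sigma)$ of length at most $p-1$. Since $|\mathcal{A}|<\infty$ there are only finitely many such words, and hence $\mathcal{O}^+(x)$ is finite.

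Next, I would apply pigeonhole to the tail $(f_{\sigma_1\cdots\sigma_m}(x))_{m\geq j}$, which takes values in the finite set $\mathcal{O}^+(x)$. Some common value $z$ is attained along a strictly increasing subsequence $j\leq m_1<m_2<\cdots$. Setting $i=m_1-j$, one has $z=f_{\sigma_1\cdots\sigma_{m_1}}(x)=f_{\sigma_{j+1}\cdots\sigma_{j+i}}(y)$. For each $k\geq 2$,
\[
z=f_{\sigma_1\cdots\sigma_{m_k}}(x)=f_{\sigma_{m_1+1}\cdots\sigma_{m_k}}(z),
\]
and rewriting the indices along the shifted sequence $\tau^{j+i}(\sigma)$ shows $f_{(\tau^{j+i}\sigma)_1\cdots(\tau^{j+i}\sigma)_{n_k}}(z)=z$ with $n_k=m_k-m_1\to\infty$. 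Thus $z$ is weakly periodic along $\tau^{j+i}(\sigma)$, so $y$ is eventually weakly periodic in the sense stipulated just before the proposition.

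The only delicate point is the bookkeeping needed to confirm that weak periodicity of $z$ is being read off with respect to the correct tail $\tau^{j+i}(\sigma)$; the dynamical content of the proposition reduces entirely to the orbit-finiteness observation, which uses nothing beyond the finiteness of $\mathcal{A}$. In particular no topological hypothesis on $X$ plays any role, and the conclusion would already fail in general if $\mathcal{A}$ were allowed to be infinite.
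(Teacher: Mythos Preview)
Your argument is correct and follows essentially the same route as the paper: both exploit the finiteness of $\mathcal{A}$ to force only finitely many orbit values, then invoke pigeonhole to locate a weakly periodic tail point. The paper restricts attention to the times $ip+\ell$ (with $j=qp+\ell$) and observes that the words $\sigma_{ip+1}\cdots\sigma_{ip+\ell}$ all lie in the finite set $\mathcal{L}_\ell(\Sigma_k)$, whereas you prove the slightly stronger (but no harder) fact that the full forward orbit $\mathcal{O}^+(x)$ is finite; after that the pigeonhole step and the index bookkeeping are the same, and your version spells out the passage to $\tau^{j+i}(\sigma)$ that the paper leaves implicit.
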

\begin{proof} 
	Let $x$ be periodic of period $p$ and for some $q\in \N_0=\N\cup\{0\}$
	write $j=qp+\ell$, 
	$0\leq \ell< p$. Then the set
	$\{f_{\sigma_{ip+1}\cdots\sigma_{ip+\ell}}(x):\; i\geq q\}$ is finite; because,
	$\{\sigma_{ip+1}\cdots\sigma_{ip+\ell}:\;i\geq q\}\subset
	\mathcal{L}_{\ell}(\Sigma_k)$ is finite 
	and thus for infinitely many  $i$, the values of 
	$f_{\sigma_{ip+1}\cdots\sigma_{ip+\ell}}(x)$ are identical.
\end{proof}

\begin{lem}\label{lem: PerSFT}
	Let $\mathfrak{I}=(X,\,\mathcal{F},\,\Sigma)$ be an IFS where $\Sigma$ is an $M$-step SFT. 
	Assume that $x$ is periodic of period $p$, then there is $\sigma'={u'}^\infty\in\Sigma$ where $x$ is orbitally periodic for $f_{\sigma'_1,\,\infty}$ with
	$|u'|=qp$ for some $q\in\N$.
\end{lem}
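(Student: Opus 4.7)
The plan is to slice $\sigma$ into blocks of length $p$ and use pigeonhole on the $M$-long windows following each slice, producing a single block $u'$ of length $qp$ whose infinite repetition is admissible in the SFT and fixes $x$. Concretely, since $x$ is periodic of period $p$ along some $\sigma\in\Sigma$, we have $f_{\sigma_1\cdots\sigma_{\ell p}}(x)=x$ for every $\ell\in\N$. For each $i\ge 0$ set
\[ s_i=\sigma_{ip+1}\sigma_{ip+2}\cdots\sigma_{ip+M}\in\mathcal{A}^M. \]
As $\mathcal{A}^M$ is finite, some value is attained infinitely often, so one may choose $i<j$ with $s_i=s_j$ and with $q:=j-i$ large enough that $qp\ge M$. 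Define
\[ u'=\sigma_{ip+1}\sigma_{ip+2}\cdots\sigma_{jp},\qquad |u'|=qp, \]
and let $\sigma':=u'^\infty$.

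I would then verify $\sigma'\in\Sigma$. Since $\Sigma$ is $M$-step SFT it suffices to show that every length-$(M+1)$ subword of $\sigma'$ is admissible. Windows lying inside a single copy of $u'$ are subwords of $\sigma$, hence admissible. A length-$(M+1)$ window crossing the join of two successive copies has the form $u'_{qp-k+1}\cdots u'_{qp}u'_1\cdots u'_{M+1-k}$ for some $1\le k\le M$; using $qp\ge M$ and $s_i=s_j$, this window rewrites as $\sigma_{jp-k+1}\cdots\sigma_{jp+M+1-k}$, again a subword of $\sigma$. Next, from $f_{\sigma_1\cdots\sigma_{ip}}(x)=x=f_{\sigma_1\cdots\sigma_{jp}}(x)$ together with $f_{\sigma_1\cdots\sigma_{jp}}=f_{u'}\circ f_{\sigma_1\cdots\sigma_{ip}}$, I get $f_{u'}(x)=x$. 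For regular periodicity along $\sigma'$, write an arbitrary $k\ge 1$ as $k=mqp+r$ with $0\le r<qp$; then
\[ f_{\sigma'_1\cdots\sigma'_k}(x)=f_{u'_1\cdots u'_r}\circ f_{u'}^{m}(x)=f_{u'_1\cdots u'_r}(x), \]
which depends only on $r\equiv k\pmod{qp}$. Hence $x$ is regularly periodic of period $qp$ along $\sigma'=u'^\infty$, and therefore orbitally periodic in the sense of Definition \ref{def orbitdefn}.

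The main obstacle is arranging the pigeonhole so that the resulting $u'^\infty$ is genuinely admissible in $\Sigma$: one must sample the windows $s_i$ at positions that are multiples of $p$ (so $|u'|$ is a multiple of $p$) \emph{and} simultaneously choose the gap $q$ so that $qp\ge M$, ensuring that every length-$(M+1)$ window sitting across a $u'$-boundary is contained in the overlap controlled by $s_i=s_j$ and thus reduces to a bona fide subword of $\sigma$. Everything else is bookkeeping with the period $p$.
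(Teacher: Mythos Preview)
Your argument is correct and follows essentially the same route as the paper: pigeonhole at positions that are multiples of $p$ to extract a block $u'$ of length a multiple of $p$, then use the $M$-step SFT property to certify that ${u'}^\infty\in\Sigma$ and that $f_{u'}(x)=x$. The only cosmetic difference is that the paper first enlarges $p$ so that $p\ge M$, pigeonholes on the full length-$p$ blocks, and then invokes that any word of length $\ge M$ is synchronizing, whereas you pigeonhole on the length-$M$ windows $s_i$ and verify the $(M{+}1)$-subwords at the seam by hand; the content is identical.
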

\begin{proof}
	Without loss of generality assume that $p\geq M$;  otherwise, we may choose $a\in\N$ so large with $a p\geq M$ and considering $x$ as a periodic point of period $ap$.
	
	Set $u_i:=\sigma_{(k_ip)+1}\sigma_{(k_ip)+2}\cdots \sigma_{k_{i+1}p}$ and note that $f_{u_i}(x)=x$. 
	But our alphabet is finite, thus for infinitely many $i$'s, $u_i$'s are the same and call it $u$.
	This means that there is a $w$ with $uwu\subset\sigma$ and $|w|$ an integral multiple of $p$.
	Now by the fact that $u$'s length is larger than $M$, it is synchronizing and so $\sigma'=(uw)^{\N_0}\in\Sigma$ and that in turn implies that $x$ is periodic of period $p'=|uw|$ along $\sigma'$.
\end{proof}
%%%%%%%%%%%
Let $\Sigma$ and $\Sigma'$ be two subshifts on the character sets $\mathcal{A}$ and $\mathcal{A}'$ respectively. Then, $\Sigma'$ is a factor of $\Sigma$ or $\Sigma$ is an extension of $\Sigma'$ if there is a map $\Psi:\mathcal{L}_{m+n+1}(\Sigma)\to \mathcal{A}'$ called the \emph{block map}   of memory $m$ and anticipation $n$ and a  \emph{code map} $\psi=\Psi_\infty:\Sigma\to \Sigma'$ such that 
$\psi(\sigma)_i=\sigma'_i=\Psi(\sigma_{i-m}\cdots\sigma_{i+n})$.
%%%%%%%%%%%%%%%%
\begin{defn}\cite{dastjerdi2022iterated}\label{def factor}
		Let $ \varphi $ be a continuous map from $ X $ onto $ Y $. Then $ \mathfrak{I}'=(Y,\mathcal{G}=\{g_0,\,\ldots,\,g_{\ell'}\},\,\Sigma') $ is a \emph{factor} of $\mathfrak{I}=(X,\mathcal{F}=\{f_0,\,\ldots,\,f_{\ell}\},\,\Sigma) $ if $\Sigma'$ is a factor of $\Sigma$ as above and for all $ y\in Y $ and $ x\in \varphi^{-1}(y) $
		\begin{equation*}
			\varphi\circ f_{\sigma_{i-m}\cdots \sigma_{i+n}}(x)=g_{\sigma_i'}(y) .
			\end{equation*}
	\end{defn}

\begin{rem}
	Let $({\bf S}_{\mathfrak{I}}, \, {\bf F}_{\mathfrak{I}})$ be the conventional dynamical system associated to $\mathfrak{I}$ as in \S \ref{sec preliminaries}. In that case,
	if $\mathfrak{I}'$ is a factor of $\mathfrak{I}$ given in Definition \ref{def factor}, then $({\bf S}_{\mathfrak{I}'}, \, {\bf F}_{\mathfrak{I}'})$ is a factor of $({\bf S}_{\mathfrak{I}}, \, {\bf F}_{\mathfrak{I}})$.  Surely, there are many other factors of 
	$({\bf S}_{\mathfrak{I}}, \, {\bf F}_{\mathfrak{I}})$ which do not look like this. Recall that we have this situation for subshifts: many factors of a subshift are not necessarily  subshifts.
\end{rem}

\begin{prop}\label{prop: PerSofic}
	Let $\mathfrak{I}$ and $x\in X$ be as in Lemma \ref{lem: PerSFT}, but $\Sigma$ an irreducible sofic. Then the  conclusion of that lemma  is valid.
\end{prop}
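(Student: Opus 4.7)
The plan is to follow the same pigeonhole strategy as in Lemma \ref{lem: PerSFT}, but to replace the ``$M$-step synchronizing'' property by finiteness of a cover of $\Sigma$. Since $\Sigma$ is an irreducible sofic, it admits a finite right-resolving labeled graph presentation $G$ (its Fischer cover will do); $G$ is strongly connected, and every point of $\Sigma$ is the label of at least one infinite walk in $G$.

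First, I would fix a lift of the given $\sigma$ (along which $x$ is periodic of period $p$) to an infinite walk $e_1 e_2\cdots$ in $G$. For $k\ge 0$, let $v_k$ be the initial vertex of $e_{kp+1}$ and set $u_k=\sigma_{kp+1}\cdots\sigma_{(k+1)p}$. Exactly the composition identity used in Lemma \ref{lem: PerSFT} (writing $f_{\sigma_1\cdots\sigma_{(k+1)p}}=f_{u_k}\circ f_{\sigma_1\cdots\sigma_{kp}}$ and using that both the outer map and $f_{\sigma_1\cdots\sigma_{kp}}$ fix $x$) gives $f_{u_k}(x)=x$ for every $k\ge 0$.

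Next, since the vertex set of $G$ is finite, pigeonhole yields $0\le i<j$ with $v_i=v_j$. The block $u':=u_i u_{i+1}\cdots u_{j-1}$ then labels a closed loop at $v_i$, so $(u')^\infty$ labels a valid infinite walk in $G$ and hence lies in $\Sigma$; moreover $|u'|=(j-i)p=qp$ with $q=j-i\in\N$. Composing the identities $f_{u_k}(x)=x$ for $k=i,\ldots,j-1$ gives $f_{u'}(x)=x$, and iterating gives $f_{\sigma'_1\cdots\sigma'_{\ell|u'|}}(x)=x$ for every $\ell\ge 1$, where $\sigma':=(u')^\infty$. By Definition \ref{def orbitdefn} this exhibits $x$ as orbitally periodic for $f_{\sigma'_1,\infty}$ with $|u'|=qp$, as required.

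The only genuinely new ingredient beyond Lemma \ref{lem: PerSFT} is the point where the sofic hypothesis enters, and this is the step I expect to require the most care. In a non-SFT sofic a long repeated block $u$ need not be synchronizing in $\Sigma$, so the direct splicing $(uw)^\infty\in\Sigma$ used in the SFT proof is not available. The substitute is to pigeonhole on vertices of the Fischer cover rather than on the $p$-blocks themselves: a repeat $v_i=v_j$ automatically produces a closed loop in $G$, and this guarantees $(u')^\infty\in\Sigma$ without appealing to any synchronization property of $u'$.
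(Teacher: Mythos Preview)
Your argument is correct. Both your proof and the paper's rest on the same key idea---finiteness of the Fischer cover of an irreducible sofic---but the packaging differs. The paper proceeds abstractly: it passes to the edge shift $\Sigma_G$ of the Fischer cover (a $1$-step SFT extending $\Sigma$), observes that $\mathfrak{I}$ is a factor of $\mathfrak{I}'=(X,\mathcal{F},\Sigma_G)$, applies Lemma~\ref{lem: PerSFT} upstairs, and then pushes the resulting orbitally periodic point back down via the factor map. You instead unpack this by hand: lifting $\sigma$ to a walk is exactly choosing a preimage in $\Sigma_G$, and your pigeonhole on vertices $v_k$ is a direct substitute for the synchronization step in Lemma~\ref{lem: PerSFT}. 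Your version is more self-contained and avoids the (slightly informal) identification of function families across the two alphabets that the paper's factor construction requires; the paper's version has the virtue of making the reduction to the SFT case explicit and reusable.
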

\begin{proof}
	Let $G$ be the Fischer cover (minimal right resolving graph representing the subshift) of $\Sigma$. Then, $G$ is a finite graph for our sofic shift  and let
	$\Sigma_G$  be the associated edge shift (the subshift arising from the infinite walk on $G$ and recording labels after assigning different labels to different edges). 
	This $\Sigma_G$ is a 1-step SFT and an extension of $\Sigma$. Thus by letting $\varphi: X\rightarrow X$ to be the identity map, $\mathfrak{I}$ is a factor of $\mathfrak{I}'=(X,\,\mathcal{F},\,\Sigma_G)$ and $x$ is a periodic point of $\mathfrak{I}'$ as well.  Now apply Lemma \ref{lem: PerSFT} and recall that periodic points are preserved by factor maps. 
\end{proof}

Recall that SFT's $\subsetneq$ sofics $\subsetneq$ synchronized systems. 
\begin{example}\label{ex Morse}
	The conclusion of Proposition \ref{prop: PerSofic} is not valid for the case when $\Sigma$ 
	is a non-sofic synchronized system.
\end{example}
\begin{proof}
	Let $\Sigma$ be the synchronized subshift whose cover is presented in Figure \ref{Fig: Morse}.
	In this cover, $m=m_0m_1\cdots$ is a fixed point of the Morse substitution and thus $\Sigma_m=\overline{\mathcal{O}^+(m)}$ is a minimal subshift \cite{morse1938symbolic}. Let us briefly remind the Morse substitution. Set 
	\[
	\varrho(v)=
	\begin{cases}
		01,\quad \text{if }v=0,\\
		10,\quad \text{if }v=1,
	\end{cases}
	\]
	to be the substitution map which means for any word  $u=u_0\cdots u_k$ in $\{0,\,1\}^\N$, $\varrho(u) =\varrho(u_0)\cdots \varrho(u_k)$. This 
	gives a primitive substitution with two fixed points
	\begin{eqnarray*}
		m&=0\mapsto 01\mapsto 0110\mapsto 01101001\mapsto\cdots\\
		m'&=1\mapsto 10\mapsto 1001\mapsto 10010110\mapsto\cdots
	\end{eqnarray*}
	Now, the closure of orbits of either of these fixed points under the shift map gives a minimal subshift. %Here, we pick $m$.
	
	Let $\mathfrak{I}=(S^1,\,\mathcal{F}=\{f_0,\,f_1,\,f_2\},\,\Sigma)$ where $f_0:S^1\rightarrow S^1$ is an irrational rotation, $f_1=f_0^{-1}$ and $f_2$ another irrational rotation say $f_2=f_0\circ f_0$.
	%	Note that
	%	\[
	%	\Sigma=\{\sigma=\sigma_0\sigma_1\cdots:\; \sigma_i\neq 2\}\cup \Sigma_m
	%	\]
	Since $\Sigma_m=\overline{\mathcal{O}^+(m)}$  is minimal, the only periodic points of $\Sigma$ are the cycles presented in the cover and in particular $m$ is not periodic;
	however $f_m(s)$, the orbit of $s$ along $m$, is periodic of period two for all $s\in S^{1}$. 
	In fact, if $\sigma=\sigma_0\sigma_1\cdots \in\Sigma$ is any periodic point (or any point with $\sigma_i=2$ for infinitely many $i$),
	then the orbit of any $s\in S^{1}$ is transitive along $\sigma$.
	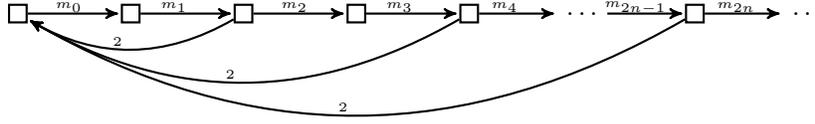
\begin{figure}
		\begin{center} 
			\begin{tikzpicture}[->,>=stealth',shorten >=1pt,auto,node distance=1.5cm,
				thick,main node/.style={rectangle,draw,font=\sffamily\small\bfseries}]
				
				\node[main node](1) {};
				\node[main node] (2) [ right of=1] {};
				\node[main node] (3) [right of=2] {};
				\node[main node] (4) [right of=3] {};
				\node[main node] (5) [right of=4] {};
				\node[] (6) [ right of=5] {$\ldots$};
				\node[main node] (7) [right of=6] {};
				%\node[main  node] (7) [ right of=6] {$ \cdots$};
				\node[] (8) [ right of=7] {$\ldots$};

				\path[every node/.style={font=\sffamily\small}]
				(1) edge node [right] { $\hspace{-6mm}\ ^{\ ^{m_{0}}}$} (2) 
				(2) edge node [right] {$\hspace{-7mm}\ ^{\ ^{m_{1}}}$} (3) 
				(3) edge node [right] {$\hspace{-6mm}\ ^{\ ^{m_{2}}}$} (4)
				edge [bend left] node[right] {$\hspace{-6mm}\ ^{\ ^{2}}$} (1)
				
				(4) edge node [right] {$\hspace{-7mm}\ ^{\ ^{m_{3}}}$} (5)
				(5)    edge node [right] {$\hspace{-7mm}\ ^{\ ^{m_{4}}}$} (6)
				edge [bend left] node[right] {$\hspace{-6mm}\ ^{\ ^{2}}$} (1)
				
				(6) edge node [right] {$\hspace{-9mm}\ ^{\ ^{m_{2n-1}}}$} (7)
				
				(7) edge node [right] {$\hspace{-7mm}\ ^{\ ^{m_{2n}}}$} (8)
				edge [bend left] node[right] {$\hspace{-6mm} \ ^{\ ^{2}} $} (1);
			\end{tikzpicture}
		\end{center}
		\begin{center}
			\caption{The cover for  $ \Sigma$. Here, $m=m_0m_1\cdots$ is a fixed point of the Morse substitution.}\label{Fig: Morse} 
		\end{center}
	\end{figure}
\end{proof}
%%%%%%%%%%%%%%%%%%%%%%%%%%%%%%%%%%%%%%%%%%%%%%
%%%%%%%%%%%%%%%%%%%%%%%%%%

\begin{prop}\label{prop transPer}
	Let \ifs be an IFS.
	If $x\in X$ is periodic of period $p$ along a transitive $t=t_1t_2\cdots\in\Sigma$,  then for any $\sigma\in\Sigma$ there is a $1\leq k<p$ such that $x$ is periodic along $\sigma'=\tau^k(\sigma)$ where $\tau$ is our shift map. 
\end{prop}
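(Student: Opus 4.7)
The plan is to exploit transitivity of $t$ to approximate $\sigma$ by shifts of $t$, then transfer the periodicity of $x$ from $t$ to the limit by a compactness/pigeonhole argument, adjusting for a bounded shift of size at most $p$.

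First I would use transitivity to pick $n_j \to \infty$ with $\tau^{n_j}(t) \to \sigma$. Write $n_j = q_j p + r_j$ with $0 \leq r_j < p$. Pigeonhole on the finitely many residues modulo $p$ lets me pass to a subsequence on which $r_j = r$ is constant. Since $\mathcal{A}^r$ is finite, a further subsequence makes the length-$r$ block $w := t_{q_j p+1}\cdots t_{q_j p+r}$ constant as well. Then $\tau^{q_j p}(t) = w\,\tau^{n_j}(t) \to w\sigma$, and $w\sigma \in \Sigma$ by closedness of $\Sigma$.

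Next I would move the periodicity along the shift. The identity $f_{t_1\cdots t_{(q_j+\ell)p}} = f_{t_{q_jp+1}\cdots t_{q_jp+\ell p}}\circ f_{t_1\cdots t_{q_jp}}$, combined with $f_{t_1\cdots t_{q_jp}}(x)=x$ and $f_{t_1\cdots t_{(q_j+\ell)p}}(x)=x$, yields
\[
f_{t_{q_jp+1}\cdots t_{q_jp+\ell p}}(x) = x \qquad \text{for every } \ell \geq 1.
\]
For each fixed $\ell$ the initial $\ell p$ symbols of $\tau^{q_jp}(t)$ coincide with those of $w\sigma$ for all sufficiently large $j$ (convergence in a discrete alphabet eventually equals), so the functions are literally equal for large $j$, giving $f_{(w\sigma)_1\cdots (w\sigma)_{\ell p}}(x)=x$ for every $\ell \geq 1$.

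Finally I would translate back to $\sigma$ itself. Setting $y := f_w(x)$, the identity above reads $f_{\sigma_1\cdots\sigma_{\ell p - r}}(y) = x$ for all $\ell \geq 1$. Specializing $\ell = 1$ gives $f_{\sigma_1\cdots\sigma_{p-r}}(y)=x$; substituting this into the $\ell \geq 2$ identities and decomposing $f_{\sigma_1\cdots\sigma_{\ell p - r}} = f_{\sigma_{p-r+1}\cdots\sigma_{\ell p - r}} \circ f_{\sigma_1\cdots\sigma_{p-r}}$ produces
\[
f_{\sigma_{p-r+1}\cdots \sigma_{p-r + (\ell-1)p}}(x) = x, \qquad \ell \geq 2,
\]
which is exactly periodicity of $x$ of period $p$ along $\tau^{p-r}(\sigma)$. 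Taking $k := p - r$ gives $k \in \{1,\dots,p-1\}$ when $r \neq 0$. The main obstacle is the boundary case $r = 0$, where the natural choice is $k=0$ rather than $1 \leq k < p$; this is handled by noting that $x$ periodic of period $p$ along $\sigma$ implies, via the same two-step decomposition with $\sigma_1\cdots\sigma_p$ playing the role of $w$, that $x$ is also periodic along $\tau^p(\sigma)$ and — more importantly — the pigeonhole subsequence can always be chosen so that $r > 0$ unless $\sigma$ already lies in the orbit of $t$ shifted by a multiple of $p$, in which case $k$ can be taken in the admissible range by picking a different approximating subsequence. I expect this bookkeeping around $r = 0$ to be the only delicate point; the rest is a clean limit argument along shifts.
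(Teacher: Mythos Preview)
Your argument is correct and is essentially the same as the paper's. Both proofs locate arbitrarily long prefixes $\sigma_1\cdots\sigma_n$ of $\sigma$ inside $t$ (you phrase this as $\tau^{n_j}(t)\to\sigma$, the paper writes $t=\underline{t}\,\zeta_n\,\bar t$), pigeonhole on the residue of the starting position modulo $p$, and then use $f_{t_1\cdots t_{\ell p}}(x)=x$ at two consecutive multiples of $p$ straddling that position to extract $f_{\sigma_{k+1}\cdots\sigma_{k+jp}}(x)=x$; your $k=p-r$ is exactly the paper's $k=i$. One small simplification: your second pigeonhole on the block $w$ is not needed --- once $r$ is fixed, the two identities $f_{\sigma_1\cdots\sigma_{p-r}}(y_j)=x$ and $f_{\sigma_1\cdots\sigma_{\ell p-r}}(y_j)=x$ already combine to give $f_{\sigma_{p-r+1}\cdots\sigma_{\ell p-r}}(x)=x$ regardless of what $w_j$ (and hence $y_j=f_{w_j}(x)$) is, which is how the paper proceeds. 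As for the boundary case $r=0$ that worries you: the paper's proof has the identical issue (it simply asserts $1\le i<p$ without justification), so your concern is legitimate but not a defect of your argument relative to theirs; the statement is most naturally read with $0\le k<p$.
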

\begin{proof}
	Let $\sigma=\sigma_1\sigma_2\cdots$ and for $n\in\N$, let 
	$\zeta_n=\sigma_1\sigma_2\cdots\sigma_n$. 
	Write $t$ as in \eqref{eq t}.
	%	
	%	
	%	Since $t$ is transitive and any $\zeta_n$ appears infinitely many times along $t$, one may write $t$ as
	%	\begin{equation}\label{eq t}
	%		t=t_1\cdots \zeta_1\cdots\zeta_2\cdots\zeta_n\cdots\zeta_{n+1}\cdots=
	%		t_1\cdots \zeta_1\cdots\zeta_2\cdots\sigma_1\sigma_2\cdots\sigma_n\cdots\zeta_{n+1}\cdots.
	%	\end{equation}
	If $n>p$, then by the fact that for $\ell\in\N$, $f_{t_1\cdots t_{\ell p}}(x)=x$, there is $1\leq i<p$ such that for some $\ell$ 
	\begin{eqnarray}\label{eq t=t_1...}
		t_1\cdots t_{\ell p}=&t_1\cdots\zeta_1\cdots\zeta_2\cdots\sigma_1\sigma_2\cdots\sigma_i,
		\quad\text{and}	\nonumber\\
		f_{t_1\cdots t_{\ell p}}(x)=&\!\!\!\!\!\!\!\!\!\!\!\!\!\!\!\!\!\!\!
		f_{t_1\cdots\zeta_1\cdots\zeta_2\cdots\sigma_1\sigma_2\cdots\sigma_i}(x)=x.
	\end{eqnarray}
	The word $\sigma_1\sigma_2\cdots\sigma_i$ appearing in \eqref{eq t=t_1...} is the initial segment of $\zeta_n$ of length $i\;(<p<n)$.
	Then since we have infinitely many $\zeta_n$, infinitely many of $i$'s in \eqref{eq t=t_1...} must be identical and denote that by $k$ and the corresponding $\zeta_n$ by $\zeta_{k_i}$.
	Now along $\zeta_{k_i}$ as a subword of $t$ and for any $\ell\in\N$ where $ k+\ell p\leq k_i$, we have 
	$f_{\sigma_{k+1}\cdots\sigma_{k+\ell p}}(x)=x$ and the conclusion follows by an induction argument and the fact that $k_i\to\infty$.
\end{proof}

Example \ref{ex Morse} is a case where the IFS has periodic points but not along any transitive $t\in\Sigma$. 

As an application of Proposition \ref{prop transPer}, consider $\mathfrak{I}=(S^1,\,\{f_0,\,f_1\},\,\Sigma_F)$ where $f_0$ is an irrational rotation and $f_1(z)=z^2$ and $\Sigma_F$ the full shift and hence sofic. Here, $\mathfrak{I}$ has a set of dense periodic points along $\sigma=1^\infty\in\Sigma$ and no periodic points along $0^\infty\in\Sigma$ and so no periodic points along any transitive $t\in\Sigma$.
%%%%%%%%%%%%%%%%%%%%%%%%%%%%%%%%

Let us end this note with a remark on considering the classical IFS as a conventional dynamical system.
\begin{rem}
	As it was pointed out earlier in \S
	\ref{sec preliminaries}, one may consider an IFS as a conventional dynamical system $({\bf S},\,{\bf F})$ where ${\bf F}$ is defined  in \eqref{eq bf S}.
	In this situation, if our subshift is a full shift $\Sigma_{|\mathcal{F}|}$, 
	then dynamical properties such as transitivity, mixing, having shadowing for $\mathfrak{I}=(X,\,\mathcal{F},\,\Sigma_{|\mathcal{F}|})$ is characterized by the respective properties for  $({\bf S},\,{\bf F})$ and vice versa. 
	In virtue of Proposition \ref{prop: PerSofic}, this claim is even true for a point $x$ being periodic.	However, from point of dynamical considerations, a  general IFS \ifs and $({\bf S},\,{\bf F})$ are two different entities. 
\end{rem}

%%%%%%%%%%%%%%%%%%%%%%%%%%%%%%%%%%%%%%%%%%%%%
%%%%%%%%%%%%%%%%%%%%%%%%%%%%%%%%%%%%
%%%%%%%%%%%%%%%%%%%%%%%%%%%%%%%%%%%%%%%%%%%%%%%%%%%

%\section*{References}
%\bibliographystyle{plain}
%\bibliography{mybibfile}

\end{document}